\newcommand{\R}{\mathbb{R}}
\theoremstyle{plain}
\newtheorem{theorem}{Theorem}
\newtheorem{proposition}[theorem]{Proposition}
\theoremstyle{definition}
\newtheorem{assumption}[theorem]{Assumption}
\begin{document}

\begin{frontmatter}

\title{Bi-stochastic kernels via asymmetric affinity
  functions\tnoteref{t1}}
\tnotetext[t1]{{\it Applied and Computational Harmonic
    Analysis}, volume 35, number 1, pages 177-180, 2013. arXiv:1209.0237.}

\author{Ronald R. Coifman}
\ead{coifman@math.yale.edu}

\author{Matthew J. Hirn\corref{cormatt}}
\ead{matthew.hirn@yale.edu}
\ead[url]{www.math.yale.edu/$\sim$mh644}

\cortext[cormatt]{Corresponding author}

\address{
Yale University \\
Department of Mathematics \\
P.O. Box 208283 \\
New Haven, Connecticut 06520-8283 \\
USA
}

\begin{abstract}
In this short letter we present the construction of a bi-stochastic
kernel $p$ for an arbitrary data set $X$ that is derived from an asymmetric affinity
function $\alpha$. The affinity function $\alpha$ measures the
similarity between points in $X$ and some reference set $Y$. Unlike
other methods that construct bi-stochastic kernels via some
convergent iteration process or through solving an optimization
problem, the construction presented here is quite simple. Furthermore,
it can be viewed through the lens of out of sample extensions, making
it useful for massive data sets.
\end{abstract}

\begin{keyword}
bi-stochastic kernel; Nystr\"{o}m extension
\end{keyword}

\end{frontmatter}

\section{Introduction}

Given a positive, symmetric kernel (matrix) $k$, the question of
how to construct a bi-stochastic kernel derived from $k$ has been of
interest in certain applications such as data clustering. Various
algorithms for this task exist. One of the best known is the
Sinkhorn-Knopp algorithm \cite{sinkhorn:doublyStochastic1967}, in
which one alternately normalizes the rows and columns of $k$ to sum to
one. A symmetrization of this algorithm is given in
\cite{zass:probCluster2005}, and is subsequently used to cluster
data. In both cases, an infinite number of iterations are needed for
the process to converge to a bi-stochastic matrix. In another
application of data clustering, the authors in
\cite{wang:clusterBiStochastic2011} solve a quadratic programming
problem to obtain what they call the Bregmanian bi-stochastication of
$k$. Common to these algorithms and others is the complexity in
solving for (or approximating) the bi-stochastic matrix.

Also related to the goal of organizing data, over the last decade we
have seen the development of a class of research that utilizes
nonlinear mappings into low dimensional spaces in order to organize
potentially high dimensional data. Examples include locally linear
embedding (LLE) \cite{roweis:lle2000}, ISOMAP
\cite{tenenbaum:isomap2000}, Hessian LLE \cite{donoho:hessianlle2003},
Laplacian eigenmaps \cite{belkin:laplacianEigen2003}, and diffusion
maps \cite{coifman:diffusionMaps2006}. In many applications, these
data sets are not only high dimensional, but massive. Thus there has
been the need to develop complementary methods by which these
nonlinear mappings can be computed efficiently. The Nystr\"{o}m
extension is one early such example; in \cite{bengio:outOfSample2004}
several out of sample extensions are given for various nonlinear
mappings, while \cite{coifman:geometricHarmonics2006} utilizes
geometric harmonics to extend empirical functions.

In this letter we present an extremely simple bi-stochastic kernel
construction that can also be implemented to handle massive data
sets. Let $X$ be the data set. The entire construction is derived not
from a kernel on $X$, but rather an asymmetric affinity function
$\alpha: X \times Y \rightarrow \R$ between the given
data and some reference set $Y$. The key to realizing the
bi-stochastic nature of the derived kernel is to apply the correct
weighted measure on $X$. The eigenfunctions (or eigenvectors) of
this bi-stochastic kernel on $X$ are also easily computable via a
Nystr\"{o}m type extension of the eigenvectors of a related kernel on
$Y$. Since the reference set can usually be taken to much smaller than
the original data set, these eigenvectors are simple to compute. 

\section{A simple bi-stochastic kernel construction}

We take our data set to be a measure space $(X,
\mu)$, in which $\mu$ represents the distribution of the points. We
also assume that we are given, or able to compute, a finite reference
set $Y \triangleq \{y_1, \ldots, y_n\}$. Note that one can take $X$ to
be discrete or finite as well; in particular, one special case is
when $X = Y$ and $\mu = \frac{1}{n}\sum_{i=1}^n \delta_{y_i}$. 

\subsection{Affinity functions and densities}

Let $\alpha: X \times Y \rightarrow \R$ be a positive affinity
function that measures the similarity between the data set $X$ and the
reference set $Y$. Larger values of $\alpha (x,y_i)$ indicate that the
two data points are very similar, while those values closer to zero
imply that $x$ and $y_i$ are quite different. The
  function $\alpha$ serves as a generalization of the traditional
  kernel function $k: X \times X \rightarrow \R$, in which $k(x,x')$
  measures the similarity between two points $x,x' \in X$ (just as
  with $\alpha$, the larger $k(x,x')$, the more similar the two
  points). Kernel functions have been successfully used in applied mathematics and
  machine learning for various data driven tasks. Certain kernel
  functions can be viewed as an inner product $k(x,x') = \Phi(x) \cdot
  \Phi(x')$, after the data set $X$ has been mapped nonlinearly into a
  higher dimensional space via $\Phi$. The idea is that if the kernel
  $k$ is constructed carefully, than the mapping $\Phi$ will arrange
  the data set $X$ so that certain tasks (such as data clustering) can
  be done more easily (say using linear methods). We show that the
  more general function $\alpha$ can be used similarly, with the added
benefits of deriving a bi-stochastic kernel that is amenable to an out of
sample Nystr\"{o}m type extension.

We derive two densities from the affinity function $\alpha$, which we
shall then use to normalize it. The first of these is the density
$\Omega: X \rightarrow \R$ on the data set $X$; we take it as
\begin{equation*}
\Omega(x) \triangleq \sum_{i=1}^n \alpha(x,y_i), \quad \text{for all }
x \in X.
\end{equation*}
We also have a density $\omega: Y \rightarrow \R$ on the reference
set, which we define as:
\begin{equation*}
\omega(y_i) \triangleq \left( \int\limits_X \alpha(x, y_i) \, \Omega(x) \,
  d\mu(x) \right)^{\frac{1}{2}}, \quad \text{for all } y_i \in Y.
\end{equation*}

\begin{assumption} \label{assumptions on alpha}
We make the following simple assumptions concerning $\alpha$:
\begin{enumerate}
\item
For each $y_i \in Y$, the function $\alpha(\cdot,y_i): X \rightarrow
\R$ is square integrable, i.e.,
\begin{equation} \label{eqn: alpha assumption}
\alpha(\cdot,y_i) \in L^2(X,\mu).
\end{equation}
\item
The densities $\Omega$ and $\omega$ are finite and strictly positive:
\begin{align} 
0 < \Omega(x) < \infty, &\quad \text{for all } x \in X, \label{eqn: omega x
  assumption} \\
0 < \omega(y_i) < \infty, &\quad \text{for all } y_i \in Y. \label{eqn:
  omega y assumption}
\end{align}
\end{enumerate}
\end{assumption}

The $L^2$ integrability condition is necessary to make sure that
functions and operators related to $X$ make sense. The upper bounds on
the densities simply put a finite limit on how close
any point $x$ or $y_i$ is to either $Y$ or $X$,
respectively. Meanwhile, the lower bounds state that each point in $X$
has some relation to the reference set $Y$, and likewise that each
reference point $y_i$ has some similarity to at least part of
$X$.

Using $\alpha$ and the two densities $\Omega$ and $\omega$, we define
a normalized affinity $\beta: X \times Y \rightarrow \R$ as
\begin{equation*}
\beta(x,y_i) \triangleq \frac{\alpha(x,y_i)}{\Omega(x) \, \omega(y_i)}, \quad
\text{for all } (x,y_i) \in X \times Y.
\end{equation*}

From this point forward we will use the weighted measure $\Omega^2\mu$
on $X$. This measure is the ``correct'' measure in the sense that it
is the measure for which we can define a bi-stochastic kernel
on $X$ in a natural, simple way. Using Assumption \ref{assumptions on
  alpha}, one can easily show that for each $y_i \in Y$, the function
$\beta(\cdot,y_i): X \rightarrow \R$ is well behaved under this
measure:
\begin{equation*}
\beta(\cdot, y_i) \in L^2(X, \Omega^2 \mu), \quad \text{for all } y_i \in Y.
\end{equation*}

We note that affinity functions similar to $\beta$ were first considered in
\cite{kushnir:anisotropicDiffEarth2011} in the context of out of
sample extensions for independent components analysis (ICA). It has
also been utilized in \cite{haddad:filteringReferenceSet2011} in the
context of filtering. The connection to bi-stochastic kernels, though,
has until now gone unnoticed. 

\subsection{The bi-stochastic kernel}

To construct the bi-stochastic kernel on $X$ we utilize the $\beta$
affinity function. Let $p: X \times X \rightarrow \R$ denote the
kernel, and define it as:
\begin{align*}
p(x,x') &\triangleq \langle \beta(x, \cdot), \beta(x', \cdot)
\rangle_{\R^n} \\
&= \sum_{i=1}^n \beta(x,y_i) \, \beta(x',y_i), \quad \text{for all } (x,x') \in X \times X.
\end{align*}
The following proposition summarizes the main properties of $p$.
\begin{proposition} \label{thm: p is bi-stochastic}
If Assumption \ref{assumptions on alpha} holds, then:
\begin{enumerate}
\item
The kernel $p$ is square integrable under the weighted measure
$\Omega^2\mu$, i.e.,
\begin{equation*}
p \in L^2(X \times X, \Omega^2 \mu \otimes \Omega^2 \mu).
\end{equation*}

\item
The kernel $p$ is bi-stochastic under the weighted measure
$\Omega^2\mu$, i.e.,
\begin{equation*}
\int\limits_X p(x,x') \, \Omega(x')^2 \mu(x') = \int\limits_X p(x',x)
\, \Omega(x')^2 \mu(x') = 1, \quad \text{for all } x \in X.
\end{equation*}
\end{enumerate}
\end{proposition}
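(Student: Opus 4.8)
The plan is to exploit two structural features of $p$: first, that it is \emph{symmetric}, since $p(x,x') = \langle \beta(x,\cdot),\beta(x',\cdot)\rangle_{\R^n} = p(x',x)$; and second, that it is a \emph{finite-rank} kernel, being a sum of $n$ products $\beta(x,y_i)\,\beta(x',y_i)$ of functions that already lie in $L^2(X,\Omega^2\mu)$. Symmetry immediately reduces part (2) to verifying a single integral identity, while the finite-rank structure will make part (1) a short consequence of the Cauchy--Schwarz inequality. Throughout, interchanging the finite sum $\sum_{i=1}^n$ with the integral over $X$ is unproblematic, and the only genuine input is the observation that the two densities $\Omega$ and $\omega$ have been defined \emph{precisely so as to telescope}.

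For part (2), I would compute directly. Pulling the finite sum outside the integral gives
\[
\int_X p(x,x')\,\Omega(x')^2\,d\mu(x') = \sum_{i=1}^n \beta(x,y_i)\int_X \beta(x',y_i)\,\Omega(x')^2\,d\mu(x').
\]
The inner integral is where the density $\omega$ does its work: substituting $\beta(x',y_i) = \alpha(x',y_i)/(\Omega(x')\omega(y_i))$ cancels one power of $\Omega(x')$, leaving $\omega(y_i)^{-1}\int_X \alpha(x',y_i)\Omega(x')\,d\mu(x') = \omega(y_i)^{-1}\cdot \omega(y_i)^2 = \omega(y_i)$ by the very definition of $\omega$. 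The outer sum then becomes $\sum_{i=1}^n \beta(x,y_i)\,\omega(y_i) = \Omega(x)^{-1}\sum_{i=1}^n \alpha(x,y_i) = 1$, where the last step is the definition of $\Omega$. Thus the $\omega$-normalization absorbs the integration over $X$ and the $\Omega$-normalization absorbs the summation over $Y$; symmetry of $p$ then gives the other equality for free. Assumption \ref{assumptions on alpha} enters only to guarantee that every quantity here is finite and that division by $\Omega$ and $\omega$ is legitimate.

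For part (1), I would use the finite-rank representation. Since each $\beta(\cdot,y_i)\in L^2(X,\Omega^2\mu)$, the pairwise pairings $c_{ij} \triangleq \langle \beta(\cdot,y_i),\beta(\cdot,y_j)\rangle_{L^2(X,\Omega^2\mu)}$ are all finite by Cauchy--Schwarz. Expanding $|p(x,x')|^2 = \sum_{i,j}\beta(x,y_i)\beta(x,y_j)\,\beta(x',y_i)\beta(x',y_j)$ and applying Tonelli (the integrand is nonnegative) factors the double integral over $X\times X$ as
\[
\|p\|_{L^2(X\times X,\,\Omega^2\mu\otimes\Omega^2\mu)}^2 = \sum_{i,j=1}^n c_{ij}^2,
\]
a finite sum of finite terms, which is exactly the assertion that $p$ is Hilbert--Schmidt. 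I do not anticipate a real obstacle in either part: both are direct calculations. If anything, the subtle point is conceptual rather than technical — recognizing that $\Omega^2\mu$, and not $\mu$, is the measure under which the cancellations line up, which is precisely the design choice that the definitions of $\Omega$, $\omega$, and $\beta$ encode.
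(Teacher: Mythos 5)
Your proposal is correct and follows essentially the same route as the paper: part (2) is the identical telescoping computation (the inner integral collapses to $\omega(y_i)$ by the definition of $\omega$, the outer sum to $1$ by the definition of $\Omega$, with symmetry giving the column sum for free), and part (1) rests on the same finite-rank expansion of $\|p\|^2_{L^2}$. The only variation is that where the paper bounds the expanded norm by $n^2 \max_{y_i \in Y} \|\beta(\cdot,y_i)\|_{L^2(X,\Omega^2\mu)}^4$ via H\"{o}lder, you factor it exactly as $\sum_{i,j=1}^n c_{ij}^2$ --- incidentally the squared Frobenius norm of the matrix $A$ the paper introduces later --- a slightly sharper but equivalent observation.
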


\begin{proof}
We begin with the $L^2$ integrability condition. Using the definition of
$p$, expanding its $L^2$ norm, and applying H\"{o}lder's theorem gives:
\begin{align*}
\|p\|_{L^2(X \times X, \Omega^2\mu \otimes
  \Omega^2\mu)}^2 &= \int\limits_X \int\limits_X \sum_{i,j=1}^n
\beta(x,y_i) \, \beta(x',y_i) \, \beta(x,y_j) \, \beta(x',y_j) \,
\Omega(x')^2 \, \Omega(x)^2 \, d\mu(x) d\mu(x') \\
&\leq n^2 \max_{y_i \in Y} \| \, \beta(\cdot,y_i)\|_{L^2(X,\Omega^2\mu)}^4 \\
&< \infty.
\end{align*}

Now we show that $p$ is bi-stochastic:
\begin{align*}
\int\limits_X p(x,x') \, \Omega(x')^2 \, d\mu(x') &= \int\limits_X
\sum_{i=1}^n \frac{\alpha(x,y_i) \, \alpha(x',y_i)}{\Omega(x) \,
  \Omega(x') \, \omega(y_i)^2} \, \Omega(x')^2 \, d\rho(y) d\mu(x') \\
&= \sum_{i=1}^n \frac{\alpha(x,y_i)}{\Omega(x) \, \omega(y_i)^2}
\int\limits_X \alpha(x',y_i) \, \Omega(x') \, d\mu(x') \\
&= \sum_{i=1}^n \frac{\alpha(x,y_i)}{\Omega(x)} \\
&= 1.
\end{align*}
Since $p$ is clearly symmetric, this completes the proof.
\end{proof}

Since $p \in L^2(X \times X, \Omega^2\mu \otimes \Omega^2\mu)$, one
can define the integral operator $P: L^2(X,\Omega^2\mu) \rightarrow
L^2(X,\Omega^2\mu)$ as:
\begin{equation*}
(Pf)(x) \triangleq \int\limits_X p(x,x') f(x') \, \Omega(x')^2 \,
d\mu(x'), \quad \text{for all } f \in L^2(X,\Omega^2\mu).
\end{equation*}
Given the results of Proposition \ref{thm: p is bi-stochastic}, we see
that $P$ is a Hilbert-Schmidt, self-adjoint, diffusion operator. In
terms of data organization and clustering, it is usually the
eigenfunctions and eigenvalues of $P$ that are of interest; see, for
example, \cite{coifman:diffusionMaps2006}. The fact that $P$ is
bi-stochastic though, as opposed to merely row stochastic, could make it
particularly interesting for these types of applications in which
$I-P$ is used as an approximation of the Laplacian.

\subsection{A Nystr\"{o}m type extension}

The affinity $\beta$ can also be used to construct an $n \times n$
matrix $A$ as follows:
\begin{align*}
A[i,j] &\triangleq \langle \beta(\cdot,y_i), \beta(\cdot,y_j)
\rangle_{L^2(X,\Omega^2\mu)} \\
&= \int\limits_X \beta(x,y_i) \, \beta(x,y_j) \, \Omega(x)^2 \,
d\mu(x), \quad \text{for all } i,j = 1, \ldots, n.
\end{align*}
The matrix $A$ is useful for computing the eigenfunctions and
eigenvalues of $P$. The following proposition is simply an
interpretation of the singular value decomposition (SVD) in this context.
\begin{proposition}
If Assumption \ref{assumptions on alpha} holds, then:
\begin{enumerate}
\item
Let $\lambda \in \R \setminus \{0\}$. Then $\lambda$ is an eigenvalue
of $P$ if and only if it is an eigenvalue of $A$. 
\item
Let $\lambda \in \R \setminus \{0\}$. If $\psi \in
L^2(X,\Omega^2\mu)$ is an eigenfunction of $P$ with eigenvalue
$\lambda$ and $v \in \R^n$ is the corresponding eigenvector of $A$, then:
\begin{align} 
\psi(x) &= \frac{1}{\sqrt{\lambda}} \sum_{i=1}^n \beta(x,y_i) \,
v[i], \nonumber \\
v[i] &= \frac{1}{\sqrt{\lambda}} \int\limits_X \beta(x,y_i) \, \psi(x) \,
\Omega(x)^2 \, d\mu(x). \nonumber
\end{align}
\end{enumerate}
\end{proposition}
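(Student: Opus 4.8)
The plan is to recognize $P$ and $A$ as the two ``Gram-type'' operators $BB^*$ and $B^*B$ built from a single feature-map operator, and then read the statement off the singular value decomposition (SVD) of that operator. First I would introduce the linear map $B: \R^n \rightarrow L^2(X,\Omega^2\mu)$ defined by
\[
(Bv)(x) \triangleq \sum_{i=1}^n \beta(x,y_i)\, v[i].
\]
Since each $\beta(\cdot,y_i) \in L^2(X,\Omega^2\mu)$ (established in the previous subsection) and $n < \infty$, the map $B$ is well defined and of finite rank, hence bounded and compact. Pairing $Bv$ against $f$ in $L^2(X,\Omega^2\mu)$ and interchanging the finite sum with the integral identifies the adjoint as
\[
(B^* f)[i] = \int\limits_X \beta(x,y_i)\, f(x)\, \Omega(x)^2 \, d\mu(x).
\]
Then I would verify directly that $P = BB^*$ and $A = B^*B$: both identities follow by expanding the compositions and comparing against the definitions of $p$ and of $A[i,j]$.

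Next I would prove part (1). The key is the standard fact that for a (compact) operator $B$, the self-adjoint operators $BB^*$ and $B^*B$ have identical nonzero spectra. Concretely, if $Av = \lambda v$ with $\lambda \neq 0$ and $v \neq 0$, then $P(Bv) = B(B^*B)v = \lambda\,(Bv)$, and $Bv \neq 0$ because $\|Bv\|^2 = \langle v, B^*Bv\rangle_{\R^n} = \lambda\|v\|^2 \neq 0$; hence $\lambda$ is an eigenvalue of $P$. The converse is symmetric, using $A(B^*\psi) = B^*(BB^*)\psi = \lambda\,(B^*\psi)$ together with $\|B^*\psi\|^2 = \lambda\|\psi\|^2$. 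This gives the claimed equivalence of nonzero eigenvalues.

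Finally, for part (2) I would invoke the finite SVD $B = \sum_k \sigma_k\, \phi_k \, \langle u_k, \cdot \rangle_{\R^n}$, where $\{u_k\}$ is an orthonormal system of eigenvectors of $A$ and $\{\phi_k\}$ an orthonormal system of eigenfunctions of $P$, satisfying $Au_k = \sigma_k^2 u_k$, $Bu_k = \sigma_k \phi_k$, and $B^*\phi_k = \sigma_k u_k$. Setting $\lambda = \sigma_k^2$ and identifying $v = u_k$, $\psi = \phi_k$ reproduces exactly the two stated formulas $\psi = \lambda^{-1/2} Bv$ and $v = \lambda^{-1/2} B^*\psi$.

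The only real bookkeeping — and the single point I would treat carefully — is the normalization that fixes the $\lambda^{-1/2}$ constants. I would check that if $v$ is unit-normalized in $\R^n$, then $\psi \triangleq \lambda^{-1/2} Bv$ is automatically unit-normalized in $L^2(X,\Omega^2\mu)$, since $\|\psi\|^2 = \lambda^{-1}\langle v, B^*Bv\rangle_{\R^n} = \|v\|^2 = 1$, and conversely that the two formulas are mutually inverse: applying $B^*$ to $\psi = \lambda^{-1/2}Bv$ and using $B^*B = A$ recovers $v = \lambda^{-1/2}B^*\psi$. This confirms that the stated constants are precisely the ones pairing a unit eigenvector of $A$ with a unit eigenfunction of $P$, completing the proof.
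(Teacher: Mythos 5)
Your proposal is correct and is precisely the argument the paper has in mind: the paper offers no written proof, remarking only that the proposition ``is simply an interpretation of the singular value decomposition (SVD) in this context,'' and your construction of $B$ with $P = BB^*$, $A = B^*B$, the standard nonzero-spectrum equivalence, and the SVD pairing $Bu_k = \sigma_k \phi_k$, $B^*\phi_k = \sigma_k u_k$ makes that remark rigorous. Your extra care with the $\lambda^{-1/2}$ normalization (checking that unit eigenvectors map to unit eigenfunctions) is exactly the right bookkeeping and fills in the one detail the paper leaves implicit.
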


\section{Acknowledgements}

This research was supported by Air Force Office of Scientific Research
STTR FA9550-10-C-0134 and by Army Research Office MURI
W911NF-09-1-0383. We would also like to thank the anonymous reviewer
for his or her helpful comments and suggestions.

\bibliographystyle{model1-num-names}
\bibliography{DiffusionGeometry}

\end{document}